\newtheorem{thm}[equation]{Theorem}
\newtheorem{lem}[equation]{Lemma}
\newtheorem{prop}[equation]{Proposition}
\theoremstyle{definition}
\numberwithin{equation}{section}
\def\P{{\mathbb{P}}}
\def\R{{\mathbb{R}}}
\def\E{{\mathbb{E}}}
\def\8{\infty}
\renewcommand{\a}{\alpha}
\renewcommand{\le}{\leqslant}\renewcommand{\leq}{\leqslant}
\renewcommand{\geq}{\geqslant}
\DeclareMathOperator*{\argmin}{arg\,min}
\begin{document}

\title{Precise large deviations  of the first passage time}
\author[D. Buraczewski, M. Ma\'slanka]
{Dariusz Buraczewski, Mariusz Ma\'slanka}
\address{D. Buraczewski, M. Ma\'slanka\\ Instytut Matematyczny\\ Uniwersytet Wroclawski\\ 50-384 Wroclaw\\
pl. Grunwaldzki 2/4\\ Poland}
\email{dbura@math.uni.wroc.pl\\ maslanka@math.uni.wroc.pl}

\subjclass[2010]{Primary 60G50, 60F10}

\keywords{first passage time, ruin problem, large deviations, random walk.}

\thanks{The research was partially supported by the National Science Centre, Poland (Sonata Bis, grant number  UMO-2014/14/E/ST1/00588)}

\begin{abstract}
Let $S_n$ be partial sums of an i.i.d.~sequence $\{X_i\}$. We assume that $\E X_1 <0$ and $\P[X_1>0]>0$. In this paper we study  the first passage time
$$ \tau_u = \inf\{n:\; S_n > u\}.
$$
The classical Cram\'er's estimate of the ruin probability says that
$$
\P[\tau_u<\infty] \sim C e^{-\a_0 u}\qquad \mbox{as } u\to \8,
$$  for some parameter $\a_0$.
 The aim  of the paper is to describe precise large deviations of the first crossing by $S_n$ a linear boundary, more precisely for a fixed parameter $\rho$ we study asymptotic behavior of  $\P\big[\tau_u = \lfloor u/\rho\rfloor \big]$ as $u$ tends to infinity.
\end{abstract}

\maketitle

\section{Introduction}
Let $\{X_i\}$ be a sequence of independent and identically distributed (i.i.d.) real valued random variables. We denote by $S_n$ the partial sums of $X_i$, i.e. $S_0=0$, $S_n = X_1+\cdots+ X_n$. In this paper we are interested in the situation when $X_1$ has negative drift, but simultaneously $\P[X_1 >0]>0$.  Our primary objective is to describe the precise large deviations of the linearly normalized first passage time
$$\tau_u = \inf \{n:S_n > u \},$$
as $u$ tends to infinity.

The stopping time $\tau_u$ arises in various contexts in probability, e.g. in risk theory, sequential statistical analysis, queueing theory.   
We refer to Siegmund \cite{S} and Lalley \cite{Lalley} for a comprehensive bibliography. A celebrated result concerning $\tau_u$, playing a major role in the ruin theory, is due to Cram\'er, who revealed estimate of the ruin probability
\begin{equation}
\label{eq: cramer}
\P[\tau_u<\infty] \sim C e^{-\alpha_0 u}, \qquad \mbox{as } u\to\infty,
\end{equation} for some parameter $\alpha_0$ that  will be described below (see Cram\'er \cite{C} and Feller \cite{F}).

Our aim is to describe the probability that at a given time partial sums $S_n$ first cross a linear boundary $\rho n$. This problem was studied e.g. by Siegmund \cite{S} and  continued by Lalley \cite{Lalley}.
Up to our best knowledge all the known results concern probabilities of the form $\P[\tau_u< u/\rho ]$ or $\P[u/\rho < \tau_u <\infty]$, see Lalley \cite{Lalley} (see also Arfwedson \cite{Arf} and Asmussen \cite{Asmussen} for similar results related to compound Poisson risk model). In this paper we describe pointwise behavior of $\tau_u$, i.e. the asymptotic behavior of $\P\big[\tau_u = \lfloor u/\rho  \rfloor \big]$ as $u$ tends to infinity.

\section{Statement of the results}

Our main result will be expressed in terms of  the moment and cumulant generating functions of $X_1$, i.e.
\[
\lambda(s) = \mathbb{E}[e^{sX_1}] \quad \text{and} \quad  \Lambda(s) = \log \lambda(s),
\]
respectively. We assume that $\lambda(s)$ exists for $s$ in the interval $D = [0, s_0)$ for some $s_0 > 0$. It is well known that both $\lambda$ and $\Lambda$ are smooth and convex on $D$.
Throughout the paper we assume that there are $\alpha \in D$ and $\xi > 0$ such that
\begin{equation}\label{eq:1.1}
\rho= \Lambda'(\alpha) > 0
\end{equation}
and
\begin{equation*}
 \lambda(\alpha + \xi) < \infty.
\end{equation*}
Observe that \eqref{eq:1.1} implies that $\mathbb{P}\left[X_1 > 0 \right] > 0$.

Recall the convex conjugate (or the Fenchel-Legendre transform) of $\Lambda$ defined by
$$
\Lambda^*(x) = \sup_{s\in \R}\{sx - \Lambda(s)\}, \quad x\in\R.
$$ This rate function appears in studying large deviations problems for random walks. Its various properties can be found in Dembo, Zeitouni \cite{DZ}. Given $\a < s_0$ and $\rho$ as in \eqref{eq:1.1} we consider
$$
\overline \a = \frac 1{\rho}\; \Lambda^*(\rho).
$$ An easy calculation shows
\[
\overline{\alpha} = \alpha - \frac{\Lambda(\alpha)}{\Lambda'(\alpha)}.
\]
The parameter $\overline \a$ arises in the classical large deviations theory for random walks. The Petrov's theorem and the Bahadur-Rao theorem say that
\begin{equation}\label{eq: petrov}
\P[S_n > n\rho] \sim C \; \frac{e^{-\overline \a n \rho}}{\sqrt n} \qquad \mbox{as } n \to \infty,
\end{equation}
 (see Petrov \cite{Petrov} and Dembo, Zeitouni \cite{DZ}). As we will see below $\overline \a$ will play also the crucial role in our result.
 This parameter has a geometric interpretation: the tangent line to $\Lambda$ at point $\alpha$ intersects the
$x$-axis at $\overline{\alpha}$. See the Figure \ref{fig} below.

\begin{figure}[!h]
\centering
\caption{$\Lambda(s) = \log\mathbb{E}e^{sX_1}$}\label{fig}
\includegraphics{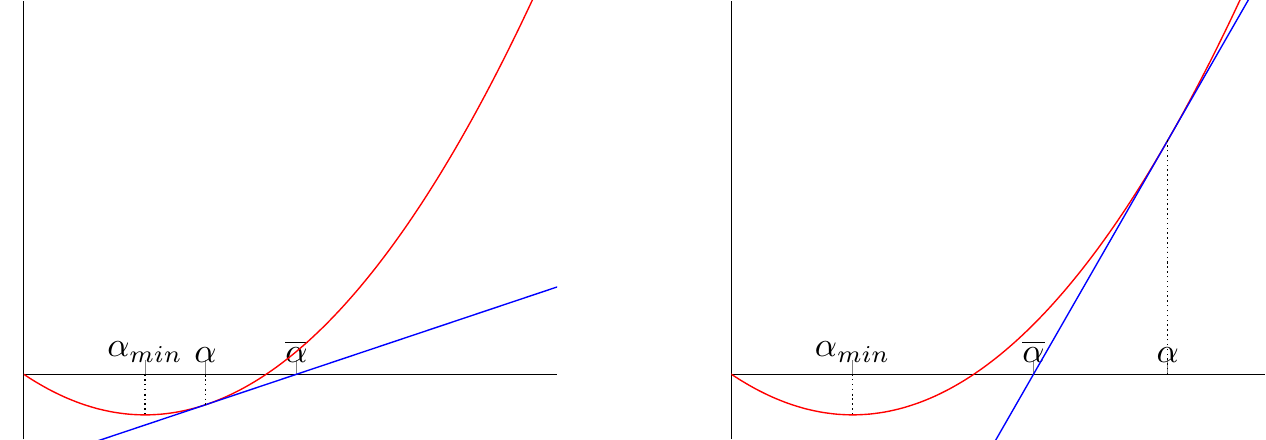}
\end{figure}

We also introduce parameters  $k_u$ and $\alpha_{min}$ defined by
\[
 \alpha_{min} = \argmin \Lambda(s)  \quad \text{and } \quad  k_u = \frac{u}{\rho} .
\]

\medskip

Now we are ready to state our main result.

\begin{thm}\label{th1} Assume that $\{X_i\}$ is an i.i.d. sequence such that the law of $X_1$ is nonlattice, $\E X_1<0$ and $\rho = \Lambda'(\a)>0$ for some $\a<s_0$. Then
\begin{equation*}
\begin{split}
\mathbb{P}\left[\tau_u =\left \lfloor{k_u }\right \rfloor \right] =  C(\alpha) \lambda(\alpha)^{-\Theta (u)} \; \frac{e^{-u\overline{\alpha}}}{\sqrt{u}} \; (1+o(1)) \quad \text{as} \quad  u \to \infty
\end{split}
\end{equation*}
for some constant $C(\a)>0$ and  $\Theta (u) = k_u - \left \lfloor{k_u}\right \rfloor$.
\end{thm}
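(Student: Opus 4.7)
The natural approach is an exponential (Cramér) change of measure at parameter $\alpha$, which reduces the problem to a precise first-passage asymptotic for a walk with positive drift. Define $\tilde{\mathbb{P}}$ on $\mathcal{F}_n$ by $d\tilde{\mathbb{P}}/d\mathbb{P} = e^{\alpha S_n - n\Lambda(\alpha)}$; under $\tilde{\mathbb{P}}$ the $X_i$ remain i.i.d.\ with mean $\Lambda'(\alpha) = \rho$ and variance $\sigma^2 := \Lambda''(\alpha)$. Writing $n := \lfloor k_u \rfloor = u/\rho - \Theta(u)$ and using $\Lambda(\alpha)/\rho = \alpha - \overline{\alpha}$, a direct computation gives
\begin{equation*}
\mathbb{P}[\tau_u = n] \;=\; \lambda(\alpha)^{-\Theta(u)}\, e^{-u\overline{\alpha}}\, \tilde{\mathbb{E}}\bigl[e^{-\alpha(S_{\tau_u}-u)}\, \mathbf{1}_{\{\tau_u = n\}}\bigr].
\end{equation*}
Since the exponential prefactors already match those in the theorem, the task reduces to showing
\begin{equation*}
\tilde{\mathbb{E}}\bigl[e^{-\alpha(S_{\tau_u}-u)}\,\mathbf{1}_{\{\tau_u = n\}}\bigr] \;=\; \frac{C(\alpha)}{\sqrt{u}}\,(1 + o(1)).
\end{equation*}

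Under $\tilde{\mathbb{P}}$ the walk drifts to $+\infty$ at rate $\rho$. Classical first-passage results give $(\tau_u - u/\rho)/\sqrt{u} \Rightarrow \mathcal{N}(0,\sigma^2/\rho^3)$, asymptotically independent (by Anscombe) of the overshoot $S_{\tau_u} - u$, whose law converges to the stationary excess $\tilde{\mathbb{P}}[H > y]\,dy/\tilde{\mathbb{E}}[H]$ of the strict ascending ladder-height distribution $H$ of the tilted walk. The core of the argument is to upgrade this to the joint local limit statement
\begin{equation*}
\tilde{\mathbb{P}}[\tau_u = n,\, S_{\tau_u} - u \in dy] \;=\; \frac{\rho^{3/2}}{\sigma\sqrt{2\pi u}}\cdot\frac{\tilde{\mathbb{P}}[H > y]}{\tilde{\mathbb{E}}[H]}\,dy\,(1 + o(1)),
\end{equation*}
uniformly enough in $y$. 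Because $n - u/\rho = -\Theta(u)$ is bounded, the Gaussian factor contributes only the constant $\rho^{3/2}/(\sigma\sqrt{2\pi u})$, and integrating $e^{-\alpha y}$ against the overshoot density identifies
\begin{equation*}
C(\alpha) \;=\; \frac{\rho^{3/2}}{\sigma\sqrt{2\pi}\,\tilde{\mathbb{E}}[H]}\int_0^\infty e^{-\alpha y}\,\tilde{\mathbb{P}}[H > y]\,dy.
\end{equation*}

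To establish the joint local limit statement, the plan is to use the ladder decomposition $\tau_u = T_1 + \cdots + T_{N_u}$, where $(T_i, H_i)$ are the i.i.d.\ strict ascending ladder epoch/height pairs under $\tilde{\mathbb{P}}$ and $N_u = \inf\{k : H_1 + \cdots + H_k > u\}$. Summing over the possible values of $N_u = k$ (concentrated around $u/\tilde{\mathbb{E}}[H]$ with Gaussian fluctuations of order $\sqrt{u}$) and combining a local CLT for the sum $T_1 + \cdots + T_k$ with the classical renewal theorem giving the joint limiting law of $V_{k-1} := H_1 + \cdots + H_{k-1}$ and the overshoot $H_k - (u - V_{k-1})$ produces the displayed asymptotic. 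The nonlattice hypothesis on $X_1$ transfers to $H$ and ensures the applicability of Blackwell's renewal theorem as well as of the local CLT for the ladder-time sums.

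The main technical obstacle is obtaining the local asymptotic with enough uniformity in $y$ to integrate the unbounded weight $e^{-\alpha y}$. The hypothesis $\lambda(\alpha + \xi) < \infty$ yields $\tilde{\mathbb{E}}[e^{\xi H}] < \infty$ for some $\xi > 0$; a second Cramér tilt applied to the ladder-height walk together with the previous estimates then gives an effective tail bound of the form $\tilde{\mathbb{P}}[\tau_u = n,\, S_{\tau_u} - u > Y] \leq e^{-cY}/\sqrt{u}$ for large $Y, u$, supplying the uniform integrability needed to pass to the limit under the integral and conclude.
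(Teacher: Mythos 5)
Your reduction is correct and efficient: the tilting identity $\mathbb{P}[\tau_u=n]=\lambda(\alpha)^{-\Theta(u)}e^{-u\overline{\alpha}}\,\tilde{\mathbb{E}}\bigl[e^{-\alpha(S_{\tau_u}-u)}\mathbf{1}_{\{\tau_u=n\}}\bigr]$ with $n=\lfloor k_u\rfloor$ checks out, and it correctly isolates the exponential factors of the theorem. But the remaining task — the joint local limit statement $\tilde{\mathbb{P}}[\tau_u=n,\,S_{\tau_u}-u\in dy]\sim \frac{\rho^{3/2}}{\sigma\sqrt{2\pi u}}\frac{\tilde{\mathbb{P}}[H>y]}{\tilde{\mathbb{E}}[H]}\,dy$, uniformly enough in $y$ to integrate $e^{-\alpha y}$ — is essentially the whole theorem in disguise, and your sketch of it has a genuine gap. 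Anscombe's theorem and the renewal theorem give only the \emph{integrated} statements (weak convergence of $(\tau_u-u/\rho)/\sqrt u$ and of the overshoot, with asymptotic independence in distribution); they do not yield the probability of the single value $\tau_u=n$ at scale $1/\sqrt u$, let alone jointly with the overshoot. Moreover, in the ladder decomposition the pairs $(T_i,H_i)$ are dependent within each pair, so after conditioning on $\{N_u=k\}$ and on the heights (which is what fixes the overshoot), the epochs $T_1,\dots,T_k$ are no longer an unconditioned i.i.d.\ sum: "a local CLT for $T_1+\cdots+T_k$ combined with the renewal theorem for $V_{k-1}$ and the overshoot" implicitly treats the epoch sum as independent of the height configuration, which is false. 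To repair this you need either a conditional local CLT (with control of the random conditional variance) or a bivariate local limit theorem for $(\sum T_i,\sum H_i)$ mixing a lattice and a nonlattice component, plus the exponential overshoot tail bound you mention; this is exactly the nonlinear-renewal-theory machinery of Siegmund and Lalley, and none of it is carried out in the proposal.

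For comparison, the paper deliberately avoids the change of measure and ladder variables. It works directly under $\mathbb{P}$ with a uniform Petrov/Bahadur--Rao estimate (Lemma \ref{lem: petrov}) that tolerates perturbations of the level and of the number of steps, decomposes the trajectory at time $k_u-L$ via $M_{k_u}=\max(M_{k_u-L},\,S_{k_u-L}+M_L^{k_u})$, shows through Lemma \ref{l1} that paths already above $u$ before time $k_u-L$ contribute at most $C\delta^L e^{-u\overline{\alpha}}/\sqrt u$ with $\delta<1$, and then obtains the constant as $\lim_{L\to\infty}C(\alpha)e^{-L\Lambda(\alpha)}\,\mathbb{E}\bigl[(e^{\alpha S_{L+1}}-e^{\alpha M_L})_+\bigr]$. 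So your route is a legitimate alternative (and would identify the constant in the renewal-theoretic form $\frac{\rho^{3/2}}{\sigma\sqrt{2\pi}\,\tilde{\mathbb{E}}[H]}\int_0^\infty e^{-\alpha y}\tilde{\mathbb{P}}[H>y]\,dy$), but as written the central local limit theorem is asserted rather than proved, and its sketched derivation would fail without addressing the epoch--height dependence and the local (not merely distributional) independence of $\tau_u$ and the overshoot.
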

Notice that the above formula gives the largest asymptotics when $\a=\a_0$ for $\a_0$ such that $\Lambda(\a_0)=0$. Then $\overline \a_0 = \a_0$. For all the other parameters $\a$ we have $\overline \a > \overline \a_0$.
 The parameter $\a_0$  arises in the Cram\'er's  formula \eqref{eq: cramer}.

Similar results were obtained by Lalley, who proved that  for $\alpha$ such that $\Lambda(\alpha) > 0$ we have
\begin{equation*}
\mathbb{P} \left[\tau_u \leq k_u \right] = {C_1(\alpha) \lambda(\alpha)^{-\Theta (u)}}\; \frac{e^{-u \overline{\alpha}}}{\sqrt{u}} \; (1+o(1)) \quad \text{as} \quad  u \to \infty
\end{equation*}
and for $\alpha$ such that $\Lambda(\alpha) < 0$
\begin{equation*}
\mathbb{P} \left[\tau_u > k_u \right] = {C_2(\alpha) \lambda(\alpha)^{1-\Theta (u)}} \; \frac{ e^{-u \overline{\alpha}}}{\sqrt{u}}\; (1+o(1)) \quad \text{as} \quad  u \to \infty,
\end{equation*}
for some known, depending only on $\alpha$ constants $C_1(\alpha)$, $C_2(\alpha)$ (see Lalley \cite{Lalley}, Theorem 5).

Notice that the function $\Theta(u)$ appears in all the formulas above only from purely technical reason. It reflects the fact that $\tau_u$ attains only integer values, whereas $k_u$ is continuous. Thus the function $\Theta$ is needed only to adjust both expressions for noninteger values of $k_u$. Below we will omit this point and without any saying we assume that $k_u$ is an integer.

\section{Auxillary results.}

The proof of Theorem \ref{th1} bases on the Petrov's theorem and the Bahadur-Rao theorem describing precise large deviations for random walks \eqref{eq: petrov}.
We apply here techniques, which were recently used by Buraczewski et al. \cite{BCDZ, BDZ} to study the problem of the first passage time in a more general context of perpetuities. They obtained similar results as described above, but in our context the proof is essentially simpler and final results are stronger.
%

Here we need a reinforced version of \eqref{eq: petrov}, which is both uniform and allows to slightly  perturb the parameters. As a direct consequence of Petrov's theorem \cite{Petrov} the following results was proved in \cite{BCDZ}:
\begin{lem}\label{lem: petrov}
Assume that the law of $X_1$ is nonlattice and that $\rho$ satisfies $\mathbb{E} X_1 < \rho< A_0$.
Choose $\alpha$ such that $\Lambda'(\alpha) = \rho $. If $\{\delta_n\}$, $\{j_n\}$ are two sequences  satisfying
\begin{equation}\label{eq10}
\max \{\sqrt{n}  \left| \delta_n  \right|, j_n/\sqrt{n} \} \leq \overline{\delta}_n \to 0,
\end{equation}
then
\begin{equation*}
\mathbb{P}\left[S_{n-j_n} > n\left(\rho + \delta_n\right) \right] = C(\alpha)\frac{e^{-\overline{\alpha} n\rho}}{\sqrt{n}}  e^{ - \alpha n \delta_n} \lambda(\alpha)^{-j_n }(1+o(1)) \quad \quad \text{as}\ n \to \infty,
\end{equation*}
uniformly with respect to $\rho$ in the range
\begin{equation*}
\mathbb{E}X + \epsilon \leq \rho \leq A_0 - \epsilon,
\end{equation*}
and for all $\delta_n$, $j_n$ as in \eqref{eq10}.

\end{lem}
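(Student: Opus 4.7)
The plan is to derive Lemma~\ref{lem: petrov} by an exponential change of measure followed by a uniform non-lattice local CLT, treating the perturbations $\delta_n$ and $j_n$ as small fluctuations inside the moderate-deviation window. Set $m:=n-j_n$ and $a_n:=j_n\rho+n\delta_n$, and introduce the Cram\'er tilt $d\widetilde{\mathbb{P}}/d\mathbb{P}$ with density $e^{\alpha X_i}/\lambda(\alpha)$ on each coordinate. Under $\widetilde{\mathbb{P}}$ the $X_i$ remain i.i.d., are non-lattice, and have mean $\Lambda'(\alpha)=\rho$ and variance $\sigma^2:=\Lambda''(\alpha)>0$. The identity
$$
\mathbb{P}\bigl[S_m>n(\rho+\delta_n)\bigr] = \lambda(\alpha)^m e^{-\alpha n(\rho+\delta_n)} \, \widetilde{\mathbb{E}}\bigl[e^{-\alpha(S_m-m\rho-a_n)}\mathbf{1}_{\{S_m-m\rho>a_n\}}\bigr]
$$
extracts the prefactor $\lambda(\alpha)^m e^{-\alpha n(\rho+\delta_n)} = \lambda(\alpha)^{-j_n} e^{-\alpha n\delta_n} e^{n(\Lambda(\alpha)-\alpha\rho)} = \lambda(\alpha)^{-j_n} e^{-\alpha n\delta_n} e^{-\overline{\alpha}\, n\rho}$, which reproduces exactly the exponential factors in the claim.

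It remains to show that the residual tilted expectation equals $(\alpha\sigma\sqrt{2\pi n})^{-1}(1+o(1))$. Writing $T_m:=S_m-m\rho$, this expectation is
$$
\int_0^\infty e^{-\alpha s}\, \widetilde{\mathbb{P}}\bigl[T_m - a_n\in ds\bigr].
$$
Hypothesis \eqref{eq10} forces $|a_n|\leq (\rho+o(1))\sqrt{n}\,\overline{\delta}_n = o(\sqrt{n})$, so $a_n^2/m\to 0$ and $a_n$ stays strictly inside the CLT window. Stone's non-lattice local limit theorem applied to the centered tilted walk then gives, for $s$ in any compact set,
$$
\widetilde{\mathbb{P}}\bigl[T_m - a_n \in [s,s+ds]\bigr] = \frac{ds}{\sigma\sqrt{2\pi m}}\exp\!\Bigl(-\tfrac{(a_n+s)^2}{2m\sigma^2}\Bigr)(1+o(1)) = \frac{ds}{\sigma\sqrt{2\pi n}}(1+o(1)),
$$
since both $a_n^2/m$ and $|m/n-1|$ vanish. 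The tail $s\geq R$ is controlled uniformly by $e^{-\alpha s}$ together with a standard Gaussian tail bound for $\widetilde{\mathbb{P}}[T_m\geq a_n+R]$, so sending first $n\to\infty$ and then $R\to\infty$ collapses the integral to $\int_0^\infty e^{-\alpha s}\,ds = 1/\alpha$, yielding $C(\alpha) = (\alpha\sqrt{2\pi\Lambda''(\alpha)})^{-1}$.

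The main obstacle is uniformity. The range $\mathbb{E}X_1+\varepsilon\leq \rho\leq A_0-\varepsilon$ corresponds, via $\rho=\Lambda'(\alpha)$, to $\alpha$ varying in a compact subinterval of the interior of $D$, where the tilted laws form a smoothly parameterized non-lattice family and Stone's theorem and the companion Gaussian tail bound admit uniform versions. Uniformity over admissible perturbations $(j_n,\delta_n)$ reduces to the fact that only the single majorant $\overline{\delta}_n$ enters the error terms, through $a_n^2/m = O(\overline{\delta}_n^2)$ and $|m-n|/n = O(\overline{\delta}_n/\sqrt{n})$. Put together, these two uniformizations express the content of the lemma as the direct, if careful, local-CLT consequence of Petrov's theorem \cite{Petrov} advertised in the statement.
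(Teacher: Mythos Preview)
The paper does not actually prove this lemma: it merely records it as a result ``proved in \cite{BCDZ}'' as a direct consequence of Petrov's theorem, so there is no in-paper argument to compare against. Your sketch is the standard Bahadur--Rao/Petrov derivation (exponential tilt at $\alpha$ followed by a non-lattice local CLT for the tilted walk), and it is essentially correct; this is the same route taken in \cite{BCDZ} and in \cite{Petrov}.

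Two minor points worth tightening. First, your bound $|a_n|\le(\rho+o(1))\sqrt{n}\,\overline{\delta}_n$ should read $|a_n|\le(\rho+1)\sqrt{n}\,\overline{\delta}_n$, since $|n\delta_n|\le\sqrt{n}\,\overline{\delta}_n$ contributes a full unit, not $o(1)$; the conclusion $a_n=o(\sqrt{n})$ is of course unaffected. Second, the tail control does not really need a separate ``Gaussian tail bound'': the uniform Stone estimate $\sup_x\sqrt{m}\,\widetilde{\mathbb{P}}[T_m\in[x,x+1]]\le C$ already gives
\[
\int_R^\infty e^{-\alpha s}\,\widetilde{\mathbb{P}}[T_m-a_n\in ds]\ \le\ \sum_{k\ge \lfloor R\rfloor} e^{-\alpha k}\,\widetilde{\mathbb{P}}\bigl[T_m-a_n\in[k,k+1]\bigr]\ \le\ \frac{C}{\sqrt{m}}\,\frac{e^{-\alpha\lfloor R\rfloor}}{1-e^{-\alpha}},
\]
which is all that is needed to let $R\to\infty$ after multiplying by $\sqrt{n}$. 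The uniformity in $\rho$ does indeed reduce to the uniformity of Stone's theorem over the compact family of tilted laws, which follows from a uniform lower bound on $|1-\widetilde{\varphi}_\alpha(t)|$ for $|t|$ bounded away from $0$; this is routine but should be stated rather than asserted.
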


Let us define $M_n = \max_{1 \leq k \leq n}S_k$ and $S_{i}^n = S_n - S_{n-i} =  X_{n - i +1}+...+X_{n}$ for $0 \leq i \leq n$. The following Lemma will play a crucial role in the proof.

\begin{lem}\label{l1}
Let $L$ and $M$ be two integers such that  $L \geq 1$ and  $-1 \leq M \leq L$. For any $ \gamma \geq 0$, $\alpha_{min} < \beta < \alpha$ and sufficiently large $u$, the following holds
\begin{equation*}
\begin{split}
 \mathbb{P}\left[M_{k_u - L} > u, S_{k_u-M} > u - \gamma \right] & \leq C(\alpha, \beta) e^{\gamma \beta} \lambda(\alpha)^{-L} \lambda(\beta)^{L - M}   \frac{e^{-u\overline{\alpha}}}{\sqrt{ u}},
\end{split}
\end{equation*} where $C(\a,\beta)$ is some constant depending on $\a$ and $\beta$.
\end{lem}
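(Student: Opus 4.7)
The plan is to split on the first passage time and apply two successive exponential changes of measure: a Chernoff bound with parameter $\beta$ for the walk beyond $\tau_u$, and an Esscher transform with parameter $\alpha$ for the walk up to $\tau_u$. I begin from the disjoint decomposition
$$
\mathbb{P}\!\left[M_{k_u - L} > u,\, S_{k_u - M} > u - \gamma \right] = \sum_{n=1}^{k_u - L} \mathbb{P}\!\left[\tau_u = n,\, S_{k_u - M} > u - \gamma \right],
$$
valid since $\{M_{k_u - L} > u\} = \{\tau_u \le k_u - L\}$. Conditioning on $\mathcal{F}_n$ and bounding the residual event $S_{k_u - M} - S_n > u - \gamma - S_n$ using Chernoff with parameter $\beta > 0$ (which lies in $(0,s_0)$ since $\alpha_{min} > 0$) gives
$$
\mathbb{P}\!\left[\tau_u = n,\, S_{k_u - M} > u - \gamma \right] \le e^{\beta(\gamma - u)}\, \lambda(\beta)^{k_u - M - n}\, \mathbb{E}\!\left[e^{\beta S_n}\, \mathbbm{1}_{\{\tau_u = n\}}\right].
$$

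The key step is then to Esscher-transform the factor $\mathbb{E}[e^{\beta S_n} \mathbbm{1}_{\{\tau_u = n\}}]$ to the $\alpha$-tilted law $\mathbb{P}_\alpha$. Because $S_n > u$ on $\{\tau_u = n\}$ and $\beta - \alpha < 0$,
$$
\mathbb{E}\!\left[e^{\beta S_n}\, \mathbbm{1}_{\{\tau_u = n\}}\right] = \lambda(\alpha)^n\, \mathbb{E}^{\mathbb{P}_\alpha}\!\left[e^{(\beta - \alpha) S_n}\, \mathbbm{1}_{\{\tau_u = n\}}\right] \le \lambda(\alpha)^n\, e^{(\beta - \alpha) u}\, \mathbb{P}_\alpha[\tau_u = n].
$$
Summing over $n$ reduces the problem to estimating
$$
e^{\beta \gamma - \alpha u}\, \lambda(\beta)^{k_u - M} \sum_{n=1}^{k_u - L} \left(\frac{\lambda(\alpha)}{\lambda(\beta)}\right)^{n} \mathbb{P}_\alpha[\tau_u = n].
$$
Since $\alpha_{min} < \beta < \alpha$ and $\Lambda$ is strictly increasing on $[\alpha_{min}, \infty)$, the constant $c := \Lambda(\alpha) - \Lambda(\beta)$ is strictly positive, the weights $e^{nc}$ are maximized at the endpoint $n = k_u - L$, and after reindexing $n = k_u - L - j$ the sum is geometric in $e^{-c}$.

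The final ingredient is the classical local limit theorem for the first passage time: under $\mathbb{P}_\alpha$ the walk has positive drift $\rho = \Lambda'(\alpha)$, finite variance (from $\alpha + \xi < s_0$), and nonlattice increments, so $\mathbb{P}_\alpha[\tau_u = n] \le C(\alpha)/\sqrt{u}$ uniformly in $n$ for sufficiently large $u$. Combined with the geometric weights this bounds the remaining sum by $C(\alpha, \beta)\, e^{(k_u - L) c}/\sqrt{u}$, and the identities $e^{-\alpha u}\, \lambda(\alpha)^{k_u} = e^{-u\overline{\alpha}}$ and $(\lambda(\alpha)/\lambda(\beta))^{k_u - L}\, \lambda(\beta)^{k_u - M} = \lambda(\alpha)^{k_u - L}\, \lambda(\beta)^{L - M}$ then rearrange everything into the stated form. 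The main obstacle is precisely the $1/\sqrt{u}$ correction: a crude double-Chernoff bound on both indicators recovers the correct exponential rate and all $\lambda(\cdot)$-factors but loses this polynomial factor, and retrieving it forces one to keep the event $\{\tau_u = n\}$ intact after the two tilts so that the local CLT for $\tau_u$ under $\mathbb{P}_\alpha$ can be invoked at the end.
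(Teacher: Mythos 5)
Your reduction is sound as far as it goes: the decomposition over $\{\tau_u=n\}$, $n\le k_u-L$, the Chernoff bound with $e^{\beta x}$ over the remaining $k_u-M-n\ge 0$ increments (legitimate since $\beta>\alpha_{min}>0$, because $\Lambda'(0)=\mathbb{E}X_1<0$), the Esscher transform applied to the $\mathcal F_n$-measurable event $\{\tau_u=n\}$ with the crude bound $e^{(\beta-\alpha)S_n}\le e^{(\beta-\alpha)u}$, and the final bookkeeping via $e^{-\alpha u}\lambda(\alpha)^{k_u}=e^{-u\overline{\alpha}}$ all check out, and your constants depend only on $\alpha,\beta$, as required. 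This is a genuinely different route from the paper's: there, no change of measure is made; one takes a union bound over the time $k_u-i-L$ at which the walk exceeds $u$, slices according to the level $S_{k_u-i-L}\in(u+m,u+m+1]$, and extracts the factor $u^{-1/2}$ from the uniform Petrov-type estimate (Lemma \ref{lem: petrov}) applied to $\mathbb{P}[S_{k_u-i-L}>u+m]$ in the regime $i\le K\log k_u$, $m\le N\log k_u$, while the complementary regimes are handled by pure Chernoff bounds whose extra factors $\delta^{i}$, resp. $k_u^{-\alpha N}$, absorb the lost $\sqrt u$.

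There is, however, a gap at exactly the decisive point. You invoke ``the classical local limit theorem for the first passage time'' to assert $\mathbb{P}_\alpha[\tau_u=n]\le C(\alpha)/\sqrt u$ uniformly over all $1\le n\le k_u-L$. The assertion is true, but it is not an off-the-shelf quotable statement in this uniform form: a local CLT for $\tau_u$ is itself a nontrivial result, it concerns the central window $n=u/\rho+O(\sqrt u)$, and it says nothing about $n$ of smaller order than $u$; since the entire content of the lemma is the $u^{-1/2}$, this step must be proved, not cited. It can be supplied elementarily: for $n\le\epsilon u$ (with $\epsilon$ small depending on $\alpha,\xi$) use $\mathbb{P}_\alpha[\tau_u=n]\le\mathbb{P}_\alpha[S_n>u]\le e^{-\xi u}\big(\lambda(\alpha+\xi)/\lambda(\alpha)\big)^{n}\le e^{-cu}$; for $\epsilon u\le n\le k_u-L$ write $\mathbb{P}_\alpha[\tau_u=n]\le\sum_{m\ge0}\mathbb{P}_\alpha\big[S_{n-1}\in(u-m-1,\,u-m]\big]\,\mathbb{P}_\alpha[X_1>m]$, and combine an anti-concentration bound $\sup_x\mathbb{P}_\alpha\big[S_{n-1}\in(x,x+1]\big]\le C/\sqrt n\le C'/\sqrt u$ (Kolmogorov--Rogozin, or Stone's local limit theorem) with the exponential tail of $X_1$ under $\mathbb{P}_\alpha$ (finite since $\lambda(\alpha+\xi)<\infty$). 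With that lemma-within-the-lemma supplied, your argument does yield the stated bound, arguably more transparently than the paper's two-case analysis; without it, the proof is incomplete precisely where the polynomial factor is won.
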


\begin{proof}
We have
\begin{equation*}
\begin{split}
 \mathbb{P}\left[M_{k_u - L} > u, S_{k_u-M} > u - \gamma \right] & \leq \sum_{i=0}^{k_u -1 -L} \mathbb{P}\left[ S_{k_u-M} > u - \gamma , S_{k_u - i - L} > u\right].
\end{split}
\end{equation*}
Denote $\delta = \frac{\lambda(\beta)}{\lambda(\alpha)} < 1$. To estimate the above series, we divide the set of indices into two sets.

\noindent
{\sc Case 1.} First we consider $i$ satisfying
 $i > K \log k_u $ for some constant $K$ such that $\delta^{K \log k_u} < 1/u$.
Notice that for any $u$ we have
\begin{equation*}
e^{-u \overline{\alpha}} = e^{-u\alpha} \lambda({\alpha})^{k_u}.
\end{equation*}
 Then, for any such $i$ we write
\begin{equation*}
\begin{split}
\mathbb{P}\!\left[ S_{k_u-M} \!>\! u \!- \!\gamma , S_{k_u - i - L} \!>\! u\right] & \leq
 \sum_{m=0}^{\infty} \mathbb{P}\!\left[S_{k_u-M} \!>\! u \!-\! \gamma ,u\!+\!m \!<\! S_{k_u - i - L} \!\leq\! u \!+\! m\!+\!1\right]\\
 & = \sum_{m=0}^{\infty} \mathbb{P}\!\left[S_{k_u-i-L} \!+\!  S_{L+i-M}^{k_u-M}  \!>\! u \!-\! \gamma, u \!+\! m \!<\! S_{k_u - i - L} \!\leq\! u \!+\! m\!+\!1\right] \\
 & \leq \sum_{m=0}^{\infty} \mathbb{P}\!\left[ S_{L+i-M}^{k_u-M} \!>\! -\gamma\!-\!(m\!+\!1)\right] \mathbb{P}\!\left[S_{k_u - i - L} \!>\! u\!+\!m\right] \\
  & \leq \sum_{m=0}^{\infty} e^{\beta \gamma} e^{\beta(m+1)} \lambda(\beta)^{L+i-M} e^{-u\alpha} e^{-\alpha m} \lambda(\alpha)^{k_u - i - L}\\
  & \leq C(\alpha, \beta) e^{\beta \gamma} \delta^{i} e^{-u\overline{\alpha}} \lambda(\alpha)^{-L} \lambda(\beta)^{L-M},
\end{split}
\end{equation*}
where in the third line we used Markov's inequality with functions $e^{\beta x}$ and  $e^{\alpha x}$. Summing over $i$ we obtain

\begin{equation*}
\begin{split}
 \sum_{K \log k_u < i \leq k_u -1 + L} \!\!\!\mathbb{P}\left[ S_{k_u-M} > u-\gamma , S_{k_u - i - L} > u\right] & \leq C(\alpha, \beta) \sum_{ i > K \log k_u}\!\!  e^{\beta \gamma} \delta^{i} e^{-u\overline{\alpha}} \lambda(\alpha)^{-L} \lambda(\beta)^{L-M} \\
 & \leq  C(\alpha, \beta) e^{\beta \gamma} \delta^{K \log k_u} e^{-u\overline{\alpha}}  \lambda(\alpha)^{-L} \lambda(\beta)^{L-M} \\
 & \leq  C(\alpha, \beta) e^{\beta \gamma} \frac{e^{-u\overline{\alpha}}}{ u}  \lambda(\alpha)^{-L} \lambda(\beta)^{L-M}.
\end{split}
\end{equation*}

\noindent
{\sc Case 2.} Now consider $ i \leq K \log k_u$. Let $N$ be a constant such that $-\alpha N +1 < 0$, for $\Lambda(\alpha) \geq 0$ and $-\alpha N +1 - \Lambda(\alpha)K < 0$ for $\Lambda(\alpha) < 0$. We have
\begin{equation*}
\begin{split}
\mathbb{P}\left[ S_{k_u-M} > u-\gamma , S_{k_u - i - L} > u\right] & \leq \mathbb{P}\left[ S_{k_u - i -L} \geq u + N \log  k_u \right] \\ &\quad + \mathbb{P}\left[S_{k_u-M} > u-\gamma, u< S_{k_u - i -L} < u +N \log  k_u\right]\\
& = P_1 + P_2
\end{split}
\end{equation*}
The first term $P_1$ we estimate using Markov's inequality with function $e^{\alpha x}$ and we obtain
\begin{equation*}
\begin{split}
P_1 & \leq e^{-u\alpha} k_u^{-\alpha N} \lambda(\alpha)^{k_u-i-L}  =  e^{-u\overline{\alpha}} k_u^{-\alpha N} \lambda(\alpha)^{-i} \lambda(\alpha)^{-L}  \leq C(\alpha) e^{-u\overline{\alpha}} \frac{1}{u} k_u^{-\alpha N + 1} e^{-i \Lambda(\alpha)}  \lambda(\alpha)^{-L} \\
& \leq C(\alpha) e^{-u\overline{\alpha}} \frac{1}{ u} \lambda(\alpha)^{-L}.
\end{split}
\end{equation*} To estimate $P_2$ we apply Lemma \ref{lem: petrov} and again Markov's inequality with function $e^{\beta x}$.
\begin{equation*}
\begin{split}
P_2 & =\mathbb{P}\left[S_{k_u-i-L} + S_{L+i-M}^{k_u-M}  > u-\gamma, u< S_{k_u - i -L} < u + N \log k_u \right] \\
& \leq \sum_{m=0}^{\left \lceil{N \log k_u - 1}\right \rceil } \mathbb{P}\left[S_{k_u-i-L} + S_{L+i-M}^{k_u-M} > u-\gamma, u + m < S_{k_u - i -L} < u +m+1\right]\\
& \leq \sum_{m=0}^{\left \lceil{N \log k_u - 1}\right \rceil } \mathbb{P}\left[S_{k_u-i-L} > u+m \right] \mathbb{P}\left[S_{L+i-M}^{k_u-M} > -\gamma -(m+1) \right]\\
& \leq \sum_{m=0}^{\left \lceil{N \log k_u - 1}\right \rceil } C(\alpha) \frac{e^{-u\overline{\alpha}}}{\sqrt{k_u}} \lambda(\alpha)^{-i-L} e^{-\alpha m} e^{\beta \gamma} e^{\beta (m+1)} \lambda(\beta)^{i+L-M}\\
& \leq \sum_{m=0}^{\left \lceil{N \log k_u - 1}\right \rceil } C(\alpha, \beta) \frac{e^{-u\overline{\alpha}}}{\sqrt{k_u}} e^{(\beta-\alpha) m} \delta^i e^{\beta \gamma}  \lambda(\alpha)^{-L} \lambda(\beta)^{L-M}  \\
& \leq  C(\alpha, \beta) \frac{e^{-u\overline{\alpha}}}{\sqrt{u}}  \delta^i e^{\beta \gamma}  \lambda(\alpha)^{-L} \lambda(\beta)^{L-M}.
\end{split}
\end{equation*}
 Now we sum over $i$
 
\begin{equation*}
\begin{split}
 \sum_{ i \leq K \log k_u}  \mathbb{P}[ S_{k_u-M} > u - \gamma &, S_{k_u - i - L} > u] \\
 &\leq \sum_{ i \leq K \log k_u} \!\!\! \left( P_1 + P_2 \right) \\
 & \leq  \sum_{ i \leq K \log k_u} \!\!\! \left( C(\alpha)  \frac{e^{-u\overline{\alpha}}}{u} \lambda(\alpha)^{-L} + C(\alpha, \beta) \frac{e^{-u\overline{\alpha}}}{\sqrt{u}}  \delta^i e^{\beta \gamma}  \lambda(\alpha)^{-L} \lambda(\beta)^{L-M} \right) \\
 & \leq C(\alpha) e^{-u\overline{\alpha}} \frac{ \log k_u}{u} \lambda(\alpha)^{-L} + C(\alpha, \beta) \frac{e^{-u\overline{\alpha}}}{\sqrt{ u}} e^{\beta \gamma} \lambda(\alpha)^{-L} \lambda(\beta)^{L-M} .
\end{split}
\end{equation*}
Combining both cases we end up with
\begin{equation*}
\begin{split}
 \mathbb{P}\left[M_{k_u - L} > u, S_{k_u-M} >  u - \gamma \right]  & \leq C(\alpha, \beta) e^{\beta \gamma} \frac{e^{-u\overline{\alpha}}}{u}  \lambda(\alpha)^{-L} \lambda(\beta)^{L-M}   +  C(\alpha) e^{-u\overline{\alpha}} \frac{ \log k_u}{u} \lambda(\alpha)^{-L}\\
 &\quad + C(\alpha, \beta) \frac{e^{-u\overline{\alpha}}}{\sqrt{u}}   e^{\beta \gamma} \lambda(\alpha)^{-L} \lambda(\beta)^{L-M} \\
 & \leq C(\alpha, \beta) \frac{e^{-u\overline{\alpha}}}{\sqrt{u}}   e^{\beta \gamma} \lambda(\alpha)^{-L} \lambda(\beta)^{L-M}.
\end{split}
\end{equation*}
\end{proof}

\section{Lower and upper estimates}

The goal of this section is to prove the following
\begin{prop}\label{prop1}
There is a constant $C > 0$ such that for large $u$
\begin{equation}\label{pr1eq1}
\begin{split}
\frac {1} C \;  \frac{e^{-u\overline{\alpha}} }{\sqrt{u}} \le
\mathbb{P}\left[\tau_u =  k_u + 1 \right]
\le C 
 \;  \frac{e^{-u\overline{\alpha}} }{\sqrt{u}}.
\end{split}
\end{equation}
\end{prop}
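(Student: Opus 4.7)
\emph{Plan.}
The upper bound is immediate from the inclusion $\{\tau_u=k_u+1\}\subseteq\{S_{k_u+1}>u\}$: Lemma~\ref{lem: petrov} applied with $n=k_u+1$, $j_n=0$ and $\delta_n=u/n-\rho=-\rho/(k_u+1)$ (so $\sqrt n\,|\delta_n|\to 0$) yields
$$\P[\tau_u=k_u+1]\;\le\;\P[S_{k_u+1}>u]\;\sim\;C(\alpha)\lambda(\alpha)\sqrt\rho\,\frac{e^{-u\overline\alpha}}{\sqrt u},$$
which is the right-hand inequality in~\eqref{pr1eq1}.

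For the lower bound I would exhibit an explicit sub-event of $\{\tau_u=k_u+1\}$ with probability at least $c\,e^{-u\overline\alpha}/\sqrt u$. Since $\P[X_1>0]>0$, pick $a>0$ with $p:=\P[X_1>a]>0$, and consider
$$B:=\bigl\{M_{k_u-1}\le u,\; S_{k_u}\in(u-a,u],\; X_{k_u+1}>a\bigr\}.$$
On $B$ one has $M_{k_u}=\max(M_{k_u-1},S_{k_u})\le u$ and $S_{k_u+1}=S_{k_u}+X_{k_u+1}>u$, so $B\subseteq\{\tau_u=k_u+1\}$; independence of $X_{k_u+1}$ from the preceding variables then gives
$$\P[\tau_u=k_u+1]\;\ge\; p\cdot\P\bigl[M_{k_u-1}\le u,\;S_{k_u}\in(u-a,u]\bigr].$$

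I would then split
$$\P\bigl[M_{k_u-1}\le u,\;S_{k_u}\in(u-a,u]\bigr]\;\ge\;\P\bigl[S_{k_u}\in(u-a,u]\bigr]-\P\bigl[M_{k_u-1}>u,\;S_{k_u}>u-a\bigr].$$
Two applications of Lemma~\ref{lem: petrov}, at thresholds $u-a$ (with $\delta_{k_u}=-a/k_u$) and $u$ (with $\delta_{k_u}=0$), identify the first term as $\sim C(\alpha)\sqrt\rho\,(e^{\alpha a}-1)\,e^{-u\overline\alpha}/\sqrt u$. Lemma~\ref{l1} with $L=1$, $M=0$, $\gamma=a$ and any $\beta\in(\alpha_{\min},\alpha)$ bounds the subtracted term by $C(\alpha,\beta)\,e^{a\beta}\,\lambda(\beta)\lambda(\alpha)^{-1}\,e^{-u\overline\alpha}/\sqrt u$.

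The main (and only) obstacle is to arrange the free parameters so that the Lemma~\ref{l1} bound is a strict fraction of the Petrov bound. Since $C(\alpha)$ and $C(\alpha,\beta)$ are independent of $a$ and $\beta<\alpha$, the ratio of the two estimates behaves like $O(e^{(\beta-\alpha)a})$ as $a\to\infty$; thus choosing $a$ large enough, still within the range where $\P[X_1>a]>0$, pushes this ratio below $1/2$ and leaves a strictly positive multiple of $e^{-u\overline\alpha}/\sqrt u$ as the lower bound. Should the essential supremum of $X_1$ be too small to accommodate such a large $a$, the same scheme works after modifying $B$ to use $m$ positive jumps at the final steps $k_u-m+2,\dots,k_u+1$ and invoking Lemma~\ref{l1} with $L=m$; the separating ratio becomes $O(e^{(\beta-\alpha)ma})$, which can be made arbitrarily small by increasing $m$.
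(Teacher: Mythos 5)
Your plan is essentially the paper's proof. The upper bound is identical (Lemma \ref{lem: petrov} applied to $\P[S_{k_u+1}>u]$), and your lower bound uses the same mechanism as the paper: place $S$ at an intermediate time in a window just below $u$, force a final block of increments to push the walk over $u$, bound the ``already crossed'' correction $\P[M_{\cdot}>u,\,S_{\cdot}>u-\gamma]$ by Lemma \ref{l1}, and beat the unknown constant $C(\alpha,\beta)$ by taking the offset $\gamma$ large, exploiting $e^{\beta\gamma}\ll e^{\alpha\gamma}$ for $\beta<\alpha$. When $X_1$ has enough upper support, your one-jump version ($L=1$, $\gamma=a$ large, window $(u-a,u]$) is complete and correct.

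The one place where your sketch is thinner than what is needed is the fallback for bounded (or small) support, which is the generic case and precisely what the paper's parameters $r$, $\gamma$, $L$ are designed for. ``Use $m$ positive jumps at the final steps and invoke Lemma \ref{l1} with $L=m$'' does not by itself produce a sub-event of $\{\tau_u=k_u+1\}$: if $S_{k_u+1-m}>u-ma$ and the last $m$ increments are merely each $>a$, an intermediate partial sum $S_{k_u+1-m+j}$ with $j<m$ may already exceed $u$ (one of the jumps could be much larger than $a$), so you would only be bounding $\P[S_{k_u+1}>u,\,M_{k_u+1-m}\le u]$ from below, not $\P[\tau_u=k_u+1]$. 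To fix this you must control the final jumps from above as well --- confine them to a narrow band around a point $t>0$ of the support of $X_1$, of relative width $O(1/m)$ --- and take the window for $S_{k_u+1-m}$ of matching small width, so that the path provably stays $\le u$ through time $k_u$ and crosses only at step $k_u+1$. This is exactly the role of the paper's event $\{M_{L}^{k_u}\le \gamma-r\}$ combined with the window $u-\gamma<S_{k_u-L}<u-\gamma+r$, and of the final choice of $r$, $\gamma$, $L$ guaranteeing $\P[M_L<\gamma-r,\ S_{L+1}>\gamma]>0$. With that constraint added, your construction coincides with the paper's, and the ratio/large-$\gamma$ bookkeeping you describe goes through to give the left-hand inequality in \eqref{pr1eq1}.
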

\begin{proof}
First, observe that the upper estimate  is an immediate consequence of Petrov's theorem (Lemma \ref{lem: petrov}) used with $\gamma_n = 0$. Indeed, we have
\begin{equation*}
\mathbb{P}\left[\tau_u = k_u + 1\right] = \mathbb{P}\left[M_{k_u} \leq u, S_{k_u+1} > u\right] \leq \mathbb{P}\left[ S_{k_u+1} > u\right]  \leq  C(\alpha) \frac{e^{-u\overline{\alpha}}}{\sqrt{u}}.
\end{equation*}
For the lower estimate we write for any positive $\gamma$ and any positive integer $L$
\begin{equation*}
\begin{split}
\mathbb{P}\left[\tau_u = k_u + 1\right]  = \mathbb{P}\left[M_{k_u} \leq u, S_{k_u+1} > u\right]  \geq \mathbb{P}\left[M_{k_u} \leq u, S_{k_u+1} > u, S_{L+1}^{k_u+1} > \gamma \right] .
\end{split}
\end{equation*}
For any $0 < r < \gamma$ one has
\begin{equation*}
\begin{split}
  \mathbb{P}\left[M_{k_u} \leq u, S_{k_u+1} > u, S_{L+1}^{k_u+1} > \gamma \right]  \geq \mathbb{P}\left[M_{k_u} \leq u, u - \gamma < S_{k_u-L} < r+u-\gamma, S_{L+1}^{k_u+1} > \gamma \right].
\end{split}
\end{equation*}
Let \!$M_i^n \!\!=\!\! \max(0,S_{1}^{n-i+1},S_{2}^{n-i+2},S_{3}^{n-i+3},...,S_{i-1}^{n-1},S_{i}^{n})$.\! Note that $M_{k_u} \!\!\!\!=\!\max(\!M_{k_u - L},S_{k_u - L}\!+\! M_L^{k_u})$. Hence we have
\begin{equation*}
\begin{split}
\mathbb{P}[M_{k_u} \leq u, u - \gamma & < S_{k_u-L} < r+u-\gamma, S_{L+1}^{k_u +1} > \gamma] \\
&  = \mathbb{P}\left[M_{k_u-L} \leq u,S_{k_u - L} + M_{L}^{k_u} \leq u, u - \gamma < S_{k_u-L} < r+u-\gamma, S_{L+1}^{k_u +1} > \gamma \right]\\
&  \geq \mathbb{P}\left[M_{k_u-L} \leq u,M_{L}^{k_u} \leq -r+\gamma, u -\gamma < S_{k_u-L} < r+u-\gamma, S_{L+1}^{k_u +1} > \gamma \right].
\end{split}
\end{equation*}
Finally, we combine above, use independence of $(M_{L}^{k_u}, S_{L+1}^{k_u +1})$ and $(M_{k_u-L}, S_{k_u-L})$ and the identity ${\mathbb{P}[A \cap B] = \mathbb{P}[A] -\mathbb{P}[A \cap B^c]}$ to obtain
\begin{equation}\label{p3}
\begin{split}
  \mathbb{P}\!\left[\tau_u \!=\! k_u \!+\! 1\right] & \geq \mathbb{P}\!\left[M_{k_u-L} \leq u,M_{L}^{k_u} \leq -r\!+\!\gamma, u \!-\!\gamma < S_{k_u-L} < r\!+\!u\!-\!\gamma, S_{L+1}^{k_u +1} > \gamma \right]\\
  & = \mathbb{P}\!\left[M_{k_u-L} \leq u, u\!-\!\gamma < S_{k_u-L} < r\!+\!u\!-\!\gamma \right] \mathbb{P}\!\left[ M_{L}^{k_u} \leq -r\!+\!\gamma, S_{L+1}^{k_u +1} > \gamma \right] \\
  & = \mathbb{P}\!\left[M_{L}^{k_u} \leq -r\!+\!\gamma ,  S_{L+1}^{k_u +1} > \gamma \right]\\
  & \quad \times \left( \mathbb{P}\!\left[u\!-\!\gamma < S_{k_u-L} < r\!+\!u\!-\!\gamma\right] - \mathbb{P}\!\left[M_{k_u-L} > u, u\!-\!\gamma < S_{k_u-L} < r\!+\!u\!-\!\gamma\right] \right).
\end{split}
\end{equation}

Lemma \ref{lem: petrov} gives an asymptotics
\begin{equation}\label{p1}
\begin{split}
\mathbb{P}\left[u-\gamma < S_{k_u-L} < r+u-\gamma\right] & \sim  C(\alpha, r) e^{\alpha \gamma} \lambda(\alpha)^{-L}  \frac{e^{-u\overline{\alpha}}}{\sqrt{u}} \quad \quad \text{as } u \to \infty.
\end{split}
\end{equation}

Using Lemma \ref{l1} with $M = L$ we obtain
\begin{equation}\label{p2}
\begin{split}
 \mathbb{P}\left[M_{k_u - L} > u, u-\gamma < S_{k_u-L} < r+u-\gamma \right] & \leq \mathbb{P}\left[M_{k_u - L} > u, S_{k_u-L} > u-\gamma \right] \\
 & \leq C(\alpha, \beta) \frac{e^{-u\overline{\alpha}}}{\sqrt{u}}   e^{\beta \gamma} \lambda(\alpha)^{-L},
\end{split}
\end{equation}
where $\beta < \alpha$. From \eqref{p3}, \eqref{p1} and \eqref{p2} we have
\begin{equation*}
\begin{split}
\mathbb{P}\!\left[\tau_u \!=\! k_u \!+\! 1\right] & \geq \mathbb{P}\!\left[ S_{L+1}^{k_u +1} \!>\! \gamma, M_{L}^{k_u} \leq -r \!+\! \gamma\right]\\
  & \quad \times \left( \mathbb{P}\!\left[u\!-\!\gamma < S_{k_u-L} < r\!+\!u\!-\!\gamma \right] - \mathbb{P}\!\left[M_{k_u-L} > u, u\!-\!\gamma < S_{k_u-L} < r\!+\!u\!-\!\gamma\right] \right) \\
&\geq \mathbb{P}\!\left[ S_{L+1}^{k_u +1} \!>\! \gamma, M_{L}^{k_u} \leq -r\!+\!\gamma \right]\! \left(\! C(\alpha, r)  \lambda(\alpha)^{-L}  \frac{e^{-u\overline{\alpha}}}{\sqrt{u}} e^{\alpha \gamma} \!-  C(\alpha, \beta) \frac{e^{-u\overline{\alpha}}}{\sqrt{u}}   e^{\beta \gamma} \lambda(\alpha)^{-L} \!\right)\\
& = \mathbb{P}\left[ S_{L+1}^{k_u +1} \!>\! \gamma, M_{L}^{k_u} \leq -r+\gamma\right]\lambda(\alpha)^{-L} \frac{e^{-u\overline{\alpha}}}{\sqrt{u}} \left( C(\alpha, r) e^{\alpha \gamma} -  C(\alpha, \beta)   e^{\beta \gamma}  \right).
\end{split}
\end{equation*}

Notice that $\left(M_{i}^{n}, S_{i+1}^{n+1} \right)  \,{\buildrel d \over =}\, \left(M_i, S_{i+1} \right)$. To make constants in the last term strictly positive firstly pick $r > 0$ such that $\mathbb{P}\left[X_1 > 2r\right] > 0$. Next, take $\gamma > 0$ big enough to ensure that $C(\alpha, r)     e^{\alpha \gamma} -  C(\alpha, \beta)   e^{\beta \gamma} > 0$ and $\gamma - 2r > 0$.
Now we choose large $L$ to have $\mathbb{P}\left[L X_1 >-2r +\gamma\right] > 0$. Since $\gamma$ is continuous parameter, if necessary, we can increase it to get ${\mathbb{P}\left[-2r +\gamma < L X_1 < - r +\gamma \right] > 0}$. For such constants we have
\begin{equation*}
\begin{split}
0 & < \mathbb{P}\left[X_{L+1} > 2r \right] \prod_{i=1}^{L} \mathbb{P}\left[-2r +\gamma < L X_i < - r +\gamma \right]\\
& \leq \mathbb{P}\left[X_{L+1} > 2r, S_L \geq S_{L-1} \geq ... \geq S_1,-2r + \gamma < S_L < -r+\gamma \right]\\
& \leq \mathbb{P}\left[X_{L+1} > 2r, S_L = M_L,-2r + \gamma < S_L < -r+\gamma \right]\\
& \leq \mathbb{P}\left[M_L < -r+\gamma, S_{L+1} > \gamma \right],
\end{split}
\end{equation*}
and \eqref{pr1eq1} follows.
\end{proof}

\section{Asymptotics}
\begin{proof}[Proof of Theorem \ref{th1}]
We will show that the limit
\begin{equation}\label{eq5}
\lim_{u \to \infty} e^{u\overline{\alpha}}\sqrt{u}\,  \mathbb{P}\left[\tau_{u} = k_u +1  \right]
\end{equation}
exists, which combined with Proposition \ref{prop1} gives us Theorem \ref{th1}. \newline
Fix an arbitrary $L$. Since $M_{k_u} = \max(M_{k_u - L},S_{k_u - L}+ M_{L}^{k_u})$ we have
\begin{equation}\label{eq6}
\begin{split}
 \mathbb{P}\left[ S_{k_u -L } + M_{L}^{k_u} \leq u, S_{k_u +1}  >u \right]  & =  \mathbb{P}\left[ S_{k_u -L } +M_{L}^{k_u} \leq u, S_{k_u +1}  >u, M_{k_u - L} > u \right]\\& \quad + \mathbb{P}\left[ S_{k_u -L }+ M_{L}^{k_u} \leq u, S_{k_u +1}  >u, M_{k_u - L} \leq u \right] \\
 & = \mathbb{P}\left[ S_{k_u -L } +M_{L}^{k_u} \leq u, S_{k_u +1}  >u, M_{k_u - L} > u \right]  \\& \quad + \mathbb{P}\left[M_{k_u} \leq u, S_{k_u +1}  >u \right] \\
  & = \mathbb{P}\left[ S_{k_u -L }+ M_{L}^{k_u} \leq u, S_{k_u +1}  >u, M_{k_u - L} > u \right]\\& \quad + \mathbb{P}\left[\tau_{u} = k_u +1  \right].
\end{split}
\end{equation}
From Lemma \ref{l1} with $M = -1$ and $\gamma = 0$ we obtain
\begin{equation*}
\mathbb{P}\left[M_{k_u - L} > u, S_{k_u+1} > u \right]  \leq C(\alpha, \beta) \lambda(\alpha)^{-L} \lambda(\beta)^{L+1}  \frac{e^{-u\overline{\alpha}}}{\sqrt{u}} = C(\alpha, \beta) \delta^{L}  \frac{e^{-u\overline{\alpha}}}{\sqrt{u}},
\end{equation*}
where $\delta = \frac{\lambda(\beta)}{\lambda(\alpha)} < 1$ provided $\beta < \alpha$. Thus to get \eqref{eq5} it is sufficient to show that for some large fixed $L$
\begin{equation*}
\lim_{u \to \infty} e^{u\overline{\alpha}}\sqrt{u}\, \mathbb{P}\left[ S_{k_u -L }+ M_{L}^{k_u} \leq u, S_{k_u +1}  >u \right]
\end{equation*}
exists. Indeed, multiply both sides of \eqref{eq6} by $e^{u\overline{\alpha}}\sqrt{u}$, let first $u \to \infty$ and then $L \to \infty$. We write
\begin{equation*}
\begin{split}
 \mathbb{P}\left[ S_{k_u -L }+ M_{L}^{k_u} \leq u, S_{k_u +1}  >u \right] = \quad&
 \mathbb{P}\left[u -u^{\frac{1}{4}} < S_{k_u - L} < u, S_{k_u -L } +M_{L}^{k_u} \leq u, S_{k_u +1} >u \right] \\
 &  + \mathbb{P}\left[u -u^{\frac{1}{4}} \geq S_{k_u - L}, S_{k_u -L }+ M_{L}^{k_u} \leq u, S_{k_u +1} >u \right].
\end{split}
\end{equation*}

To estimate the second summand fix $\beta > \a$ and   observe that by Markov's inequality with functions $e^{\alpha x}$ and $e^{\beta x}$ we have
\begin{equation*}
\begin{split}
 \mathbb{P}[S_{k_u - L} \leq u -u^{\frac{1}{4}}&, S_{k_u +1} >u ] \\
 &\leq \sum_{m \geq 0} \mathbb{P}\left[u -u^{\frac{1}{4}} -(m+1) < S_{k_u - L} \leq u -u^{\frac{1}{4}} -m, S_{k_u - L}+S_{L+1}^{k_u +1} >u \right]\\
 & \leq \sum_{m \geq 0} \mathbb{P}\left[S_{k_u - L}  > u -u^{\frac{1}{4}} -(m+1)\right] \mathbb{P}\left[S_{L+1} > u^{\frac{1}{4}}+ m \right] \\
 & \leq \sum_{m \geq 0}  \lambda(\alpha)^{k_u - L} e^{-u\alpha}  e^{\alpha u^{\frac{1}{4}}} e^{\alpha(m+1)} \lambda(\beta)^{L+1} e^{-\beta u^{\frac{1}{4}}} e^{-\beta m } \\
 & =  \lambda(\alpha)^{k_u - L} e^{-u\alpha}  e^{(\alpha - \beta)u^{\frac{1}{4}}} \lambda(\beta)^{L+1} \sum_{m \geq 0} e^{\alpha(m+1)} e^{-\beta m } = o\left( \frac{e^{-u\overline{\alpha}}}{\sqrt{u}} \right).
\end{split}
\end{equation*}
The same argument proves
$$
\P\big[ S_{k_u-L} > u-u^{\frac 14}, S^{k_u+1}_{L+1} > u^{\frac 14}\big] = o\left( \frac{e^{-u\overline{\alpha}}}{\sqrt{u}} \right).
$$
Now we see that
\begin{equation*}
\begin{split}
 \mathbb{P}[ S_{k_u -L }+& M_{L}^{k_u} \leq u, S_{k_u +1}  >u ] \\ 
 & = \mathbb{P}\left[u -u^{\frac{1}{4}} < S_{k_u - L} < u, S_{k_u -L } +M_{L}^{k_u} \leq u, S_{k_u +1} >u \right] + o\left( \frac{e^{-u\overline{\alpha}}}{\sqrt{u}} \right)\\
& =  \mathbb{P}\Big[u -u^{\frac{1}{4}} < S_{k_u - L} < u , S_{k_u -L } +M_{L}^{k_u} \leq u, S_{k_u +1} >u, S_{L+1}^{k_u+1} < u^{\frac 14} \Big] + o\left( \frac{e^{-u\overline{\alpha}}}{\sqrt{u}} \right)
\end{split}
\end{equation*}
and hence we reduced our problem to finding
\begin{equation*}
\lim_{u \to \infty} e^{u\overline{\alpha}}\sqrt{u}\, \mathbb{P}\left[u -u^{\frac{1}{4}} < S_{k_u - L} < u, S_{k_u -L }+ M_{L}^{k_u} \leq u, S_{k_u +1} >u, S_{L+1}^{k_u+1} < u^{\frac 14} \right].
\end{equation*}
For this purpose we write
\begin{equation}\label{eq8}
\begin{split}
 &\mathbb{P}\left[u  -u^{\frac{1}{4}} < S_{k_u - L} < u, S_{k_u -L }+ M_{L}^{k_u}\leq u, S_{k_u - L}+ S_{L+1}^{k_u +1} >u, S_{L+1}^{k_u+1} < u^{\frac 14} \right] \\
 &=  \int_{0\le y\le x < u^{\frac 14}}  \mathbb{P}\left[u - x < S_{k_u - L} < u - y \right]  \mathbb{P} \left[M_{L}^{k_u} \in dy, S_{L+1}^{k_u +1} \in dx  \right].
\end{split}
\end{equation}
Now we apply Lemma \ref{lem: petrov}   with $n=k_u$, $j_n = L$, $\overline{\delta}_n = C n^{-\frac{1}{4}}$ and  $\delta_n = -\frac{y}{n}$. We have
$$    \mathbb{P}\left[S_{k_u - L} \geq u - y  \right]  = C(\alpha) \frac{e^{-u\overline{\alpha}}}{\sqrt{u}} e^{y \alpha} e^{-L \Lambda(\alpha)} (1+o(1)),$$
provided $\max \Big\{\frac{\sqrt{u}}{u}  y,\, L/\sqrt{u} \Big\} \leq C u^{-\frac{1}{4}}$. But since $y < u^{\frac{1}{4}}$ all the assumptions of the Lemma are satisfied.
Analogously
$$    \mathbb{P}\left[S_{k_u - L} \geq u - x  \right]  = C(\alpha) \frac{e^{-u\overline{\alpha}}}{\sqrt{u}} e^{x \alpha} e^{-L \Lambda(\alpha)} (1+o(1)).$$
Back to \eqref{eq8} we end up with
\begin{equation*}
\begin{split}
 &\mathbb{P}\left[u  -u^{\frac{1}{4}} < S_{k_u - L} < u, S_{k_u -L }+ M_{L}^{k_u}\leq u, S_{k_u - L}+ S_{L+1}^{k_u +1} >u \right] \\
 &= C(\alpha) \frac{e^{-u\overline{\alpha}}}{\sqrt{u}} e^{-L \Lambda(\alpha)} \mathbb{E}\left[\left( e^{\alpha S_{L+1}} - e^{\alpha M_{L}} \right)_{+} \right]   (1+o(1)) \quad \text{as } u \to \infty.
\end{split}
\end{equation*}
Note that by the moment assumptions the expectation above  is finite, hence we conclude \eqref{eq5}.
\end{proof}

\end{document}